\newcommand{\CC}{\mathbb{C}}
\newcommand{\OO}{\mathcal{O}}
\renewcommand{\AA}{\mathbb{A}}
\renewcommand{\phi}{\varphi}
\renewcommand{\epsilon}{\varepsilon}
\newcommand{\GL}{\mathrm{GL}}
\newcommand{\Hilb}{\mathrm{Hilb}}
\newcommand{\Hom}{\mathrm{Hom}}
\newcommand{\pr}{\operatorname{pr}}%
\newtheorem{theorem}{Theorem}
\newtheorem{proposition}[theorem]{Proposition}
\theoremstyle{definition}
\newtheorem{definition}[theorem]{Definition}
\numberwithin{equation}{section}
\begin{document}

\title{Behrend's function is not constant on $\Hilb^n(\mathbb{A}^3)$}
\author{J.~Jelisiejew, M.~Kool, and R.~F.~Schmiermann}
\maketitle
\begin{abstract}
    We prove the statement in the title for $n\geq 24$.
\end{abstract}

\section{Introduction}

The Behrend function $\nu_X$ of a scheme $X$ of finite type over $\CC$ is a constructible function introduced in the seminal paper \cite{Behrend__function}. It plays a central role in Donaldson--Thomas theory: if $X$ is proper and admits a symmetric perfect obstruction theory, then the degree of the virtual cycle $[X]^{\mathrm{vir}}$ equals the $\nu_X$-weighted Euler characteristic of $X$.
It has been conjectured that $\nu_{\Hilb^n(\mathbb{A}^3)}$ is constant, equal to
$(-1)^n$, see \cite[Conj.~A]{Ricolfi_notes}, \cite[Problem~XXVI]{Jelisiejew__open_problems}, although the conjecture is older than these references. A reason for this expectation is the fact that the value of the Behrend function is indeed $(-1)^n$ on all smooth points of the smoothable component and on all monomial ideals of $\Hilb^n(\mathbb{A}^3)$ \cite{MNOP}.
Constancy would imply that $\Hilb^n(\mathbb{A}^3)$ is generically
reduced, as explained in~\cite{Ricolfi_notes}, which would solve a major open problem \cite{Fog68, Ame10}.
The Behrend function is notoriously difficult to compute. For $X$
zero-dimensional, an algebraic method was developed by
Graffeo--Ricolfi~\cite{Graffeo_Ricolfi}. In general, a powerful localization
method was introduced by Behrend-Fantechi:

\begin{theorem}[{\cite[Thm.~3.4]{Behrend__Fantechi}}]\label{ref:isolatedPointBehrend:thm}
    Let $X$ be an affine $\mathbb{C}^*$-scheme with an isolated fixed point $P$. Assume that $X$ admits an equivariant symmetric obstruction theory.
    Then $\nu_X(P) = (-1)^{\dim_{\CC} T_X|_P}$.
\end{theorem}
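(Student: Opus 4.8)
The plan is to reduce to a local critical chart and then annihilate the Euler characteristic of the relevant Milnor fibre using the circle action. First I would invoke the local structure theory for symmetric obstruction theories to produce, in an analytic (or étale) neighbourhood of $P$, an isomorphism $X \cong \mathrm{Crit}(f)$, where $f\colon U \to \CC$ is a regular function on a smooth scheme $U$ with $f(P)=0$ and $df|_P = 0$. The equivariance hypothesis lets me carry this out $\Gmult$-equivariantly, so that $\Gmult$ acts on $U$ fixing $P$; and—crucially—because the self-duality of a \emph{symmetric} obstruction theory is untwisted (valued in the trivial, hence equivariantly weight-zero, line), the potential $f$ can be taken $\Gmult$-\emph{invariant}, not merely semi-invariant. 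Shrinking $U$ to a minimal chart, I may moreover assume $\dim_\CC U = \dim_\CC T_X|_P$; equivalently the Hessian of $f$ at $P$ vanishes, which is forced once $U$ realises the embedding dimension of $X$ at $P$.

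Next I would apply Behrend's description of his function on a critical locus (after Parusiński--Pragacz): writing $n = \dim_\CC U$ and letting $\mathrm{MF}_f(P) = f^{-1}(\delta)\cap B_\epsilon(P)$ denote the Milnor fibre of $f$ at $P$ for $0<|\delta|\ll\epsilon\ll 1$, one has
\[
\nu_X(P) = (-1)^{n}\bigl(1 - \chi(\mathrm{MF}_f(P))\bigr).
\]
Since $n = \dim_\CC T_X|_P$ by minimality of the chart, it remains only to show $\chi(\mathrm{MF}_f(P)) = 0$.

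This is where the isolated fixed point enters. The circle $S^1 \subset \Gmult$ acts on $U$, and because $P$ is an isolated fixed point of the $\Gmult$-action, its only fixed point in a small invariant ball $B_\epsilon(P)$ is $P$ itself. As $f$ is $\Gmult$-invariant, $S^1$ preserves every fibre $f^{-1}(\delta)$, hence preserves $\mathrm{MF}_f(P)$; and $P \notin \mathrm{MF}_f(P)$ because $\delta \neq 0 = f(P)$. Therefore $S^1$ acts on the Milnor fibre \emph{without} fixed points, so $\chi(\mathrm{MF}_f(P)) = \chi\bigl(\mathrm{MF}_f(P)^{S^1}\bigr) = \chi(\emptyset) = 0$. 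Substituting into the formula yields $\nu_X(P) = (-1)^{n} = (-1)^{\dim_\CC T_X|_P}$, as claimed.

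The main obstacle I anticipate is the first step: producing the critical chart $\Gmult$-equivariantly with $f$ of weight zero and of minimal dimension simultaneously. The existence of \emph{some} critical chart is the local structure theorem for symmetric obstruction theories, but tracking the $\Gmult$-action through it—guaranteeing both that $f$ may be chosen strictly invariant and that the chart can be taken minimal while staying equivariant—is the delicate point, and it is precisely the untwistedness of the symmetric pairing that underwrites the weight-zero normalisation. Once this normal form is secured, Steps 2 and 3 are essentially formal.
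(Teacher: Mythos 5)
The paper does not prove this statement at all---it is imported verbatim from Behrend--Fantechi---so the comparison is with their original argument. Your Steps 2 and 3 are sound: Behrend's vanishing-cycle formula $\nu_{Z(df)}(P)=(-1)^{\dim U}\bigl(1-\chi(\mathrm{MF}_f(P))\bigr)$ is valid for any critical chart, and if $f$ is $\Gmult$-invariant and the chart is minimal, then $(T_PU)^{\Gmult}=(T_PX)^{\Gmult}=0$ (which is exactly what scheme-theoretic isolatedness of $P$ gives, as in \S\ref{sec:Behrend} of this paper), so $S^1$ acts on the compact Milnor fibre without fixed points and $\chi(\mathrm{MF}_f(P))=0$. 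Note you genuinely need minimality here and not just isolatedness of $P$ in $X$: in a non-minimal chart $U^{S^1}$ can be positive-dimensional and meet the Milnor fibre, so your parenthetical justification (``because $P$ is an isolated fixed point of the $\Gmult$-action'') only becomes correct after the reduction to the minimal chart has been carried out equivariantly.

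The genuine gap is Step 1. There is no ``local structure theory'' presenting a scheme with a Behrend--Fantechi symmetric obstruction theory as a critical locus, equivariantly or otherwise; whether symmetric obstruction theories admit local critical charts is a well-known open problem. What Behrend--Fantechi actually establish is that such an $X$ is locally the zero scheme of an \emph{almost closed} $1$-form on a smooth variety, which is strictly weaker than exactness---see the pathologies in \cite[App.~A]{Maulik_Pandharipande_Thomas}, cited in this very paper for precisely this reason. Critical charts are available only for the strictly stronger data of a d-critical structure or a $(-1)$-shifted symplectic structure (the Darboux theorem of Brav--Bussi--Joyce), which a symmetric obstruction theory is not known to induce; and even in that setting, your claim that ``untwistedness of the symmetric pairing'' forces a weight-zero potential is an extra equivariant Darboux-type statement requiring proof, not a formal consequence. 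This is why Behrend--Fantechi's proof of the quoted theorem avoids charts and Milnor fibres entirely: they work with the microlocal definition $\nu_X(P)=\operatorname{Eu}(\mathfrak{c}_X)(P)$, embed $X$ equivariantly into a smooth $T$-variety, and evaluate the local Euler obstruction of the $T$-invariant cone cycle at the isolated fixed point, the symmetry of the obstruction theory supplying the parity $(-1)^{\dim_{\CC} T_X|_P}$. That said, in the one place the present paper applies the theorem, the scheme arrives globally as $N=Z(d\psi)$ with $\psi$ a $T_0$-invariant function (via \cite[Ex.~1.19]{Behrend__Fantechi}), and in that special case your Steps 2--3, combined with an equivariant splitting-lemma reduction to a minimal chart, would constitute a correct and rather transparent proof of exactly what is needed.
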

The assumption that $X$ is affine can be removed by using (a very special case of)~\cite[Thm.~4.4]{AHR}. In our applications $X$ embeds into a smooth variety, so one can use~\cite{Sumihiro} instead.

For more on symmetric equivariant obstruction theories, we refer
to~\cite{Behrend__Fantechi, Behrend__function, Ricolfi__Modern_Enumerative_Geometry}.
The Hilbert scheme $\Hilb^n(\mathbb{A}^3)$ admits such an obstruction theory provided that
$\mathbb{C}^*$ lies inside the Calabi-Yau torus $\left\{ (t_1,t_2,t_3)\in (\mathbb{C}^*)^3\ |\
t_1t_2t_3 = 1 \right\}$.
For any $\mathbb{C}^*$ (in the usual 3-dimensional torus) acting on $\Hilb^n(\mathbb{A}^3)$ such that the fixed points are isolated, the fixed points correspond to monomial ideals.
For these, the value of the Behrend function is known to be $(-1)^n$, by combining Theorem \ref{ref:isolatedPointBehrend:thm} with the virtual localization calculation in \cite{MNOP}.
For a smooth point $P$ of the smoothable component of $\Hilb^n(\mathbb{A}^3)$, we have $\nu_{\Hilb^n(\mathbb{A}^3)}(P) = (-1)^{n}$.
The tools above are enough to prove that
$\nu_{\Hilb^n(\mathbb{A}^3)}$ is constant for $n\leq 4$\footnote{As explained by Andrea Ricolfi in a private conversation.}, where $n=4$ is the first interesting case.
Other than that, little is known.
Here we provide a first example where
$\nu_{\Hilb^n(\mathbb{A}^3)}$ is \emph{not} constant.

\begin{theorem}[{Theorem~\ref{ref:main:thm}}]\label{ref:intro:mainthm}
    For $n\geq 24$, there exist points of $\Hilb^n(\mathbb{A}^3)$ on which
    the Behrend function is equal to $-(-1)^n$. In particular, the Behrend
    function is not constant.
    For $n=24$ one such point is $[I]$, where $I\subset \CC[x,y,z]$ is the following ideal
    \[
        I = \left( (x^2) + (y, z)^2 \right)^2 + (y^3 - x^3z).
    \]
\end{theorem}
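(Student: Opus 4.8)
The strategy is dictated by Theorem~\ref{ref:isolatedPointBehrend:thm}: I want to produce a point $[I]\in\Hilb^n(\AA^3)$ that is an isolated fixed point of a $\mathbb{C}^*$-action lying inside the Calabi--Yau torus $\{t_1t_2t_3=1\}$, and whose Zariski tangent space has \emph{odd} dimension when $n$ is even. Writing $R=\CC[x,y,z]$, we have $T_{[I]}\Hilb^n(\AA^3)=\Hom_R(I,R/I)$, so once it is shown that $\dim_\CC\Hom_R(I,R/I)\not\equiv n\pmod 2$, the theorem gives $\nu([I])=(-1)^{\dim_\CC T}=-(-1)^n$ directly. Thus the entire problem is reduced to (i) choosing an admissible $\mathbb{C}^*$ so that the hypotheses of Theorem~\ref{ref:isolatedPointBehrend:thm} hold, and (ii) an exact tangent-space computation whose only relevant output is a parity.

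First I would reduce the range $n\geq 24$ to the single case $n=24$. The Behrend function is multiplicative, $\nu_{X\times Y}(x,y)=\nu_X(x)\,\nu_Y(y)$, and near a point parametrizing a subscheme supported on disjoint sets the Hilbert scheme is (Zariski-)locally a product of the punctual Hilbert schemes of the two supports. Since $\Hilb^1(\AA^3)\cong\AA^3$ is smooth of dimension $3$ with $\nu=(-1)^3=-1$, taking the base length-$24$ subscheme at the origin together with $n-24$ additional distinct reduced points yields $\nu=-(-1)^{24}\cdot(-1)^{3(n-24)}=-(-1)^n$. Crucially this step needs only the general multiplicativity of $\nu$, not the localization framework, so it is clean and leaves the base case $n=24$ as the sole substantial point.

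For $n=24$ I would first fix the torus action. The only non-monomial generator of $I$ is $y^3-x^3z$, which is homogeneous exactly for gradings satisfying $3w_2=3w_1+w_3$; imposing the Calabi--Yau relation $w_1+w_2+w_3=0$ pins down $(w_1,w_2,w_3)=(2,1,-3)$ on $(x,y,z)$ up to scaling, giving a canonical $\mathbb{C}^*\subset\{t_1t_2t_3=1\}$ for which $I$ is homogeneous. Hence $[I]$ is a fixed point and the required equivariant symmetric obstruction theory exists. In fact $I$ is semi-invariant under the whole $2$-torus $\{t_1^3t_2^{-3}t_3=1\}$; the resulting bigrading of $R$ and of $\Hom_R(I,R/I)$ is what organizes the computation into small independent weight spaces. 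A Gröbner basis with respect to a weight-compatible term order lists the standard monomials and confirms $\dim_\CC R/I=24$.

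The crux, and the main obstacle, is the tangent computation, carried out weight-by-weight in the torus grading. Isolatedness of the fixed point is equivalent to the vanishing of the weight-zero part $\Hom_R(I,R/I)_0$, since this is the tangent space to the fixed locus $(\Hilb^{24})^{\mathbb{C}^*}$ at $[I]$; its vanishing makes $[I]$ a reduced, isolated fixed point. One can anticipate that the perturbing term $y^3-x^3z$ is precisely what kills these weight-zero tangent directions while preserving the $\mathbb{C}^*$-symmetry. Separately, one must compute the total dimension $\dim_\CC\Hom_R(I,R/I)$ and verify that it is odd. Both are finite, explicit linear-algebra problems (present $I$ by generators and relations, compute homomorphisms in each graded piece, and sum over the finitely many weights), but the difficulty is that the conclusion rests entirely on the \emph{exact} parity of the answer: there is no numerical slack, the computation is best machine-verified, and one must be certain no weight-zero tangent vector is hidden in a higher-degree piece. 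Once $\dim_\CC\Hom_R(I,R/I)$ is confirmed odd while $24$ is even, Theorem~\ref{ref:isolatedPointBehrend:thm} gives $\nu([I])=-1=-(-1)^{24}$, completing the base case and, with the multiplicativity step, the theorem for all $n\geq 24$.
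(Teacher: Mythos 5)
Your reduction to $n=24$ via multiplicativity of $\nu$ is fine and matches the paper, and your weight computation is correct as far as it goes: the unique Calabi--Yau $\CC^*$ fixing $[I]$ acts with weights $(2,1,-3)$. But the crux of your plan --- that the weight-zero part of $\Hom_R(I,R/I)$ vanishes, making $[I]$ an isolated fixed point so that Theorem~\ref{ref:isolatedPointBehrend:thm} applies directly --- is false, and provably so. Since $y^3$ and $x^3z$ both have weight $3$ under $(2,1,-3)$, the whole family of ideals $I_c = \left((x^2)+(y,z)^2\right)^2 + (y^3 - c\,x^3z)$, $c\in\CC^*$, consists of $T_0$-homogeneous ideals; this family is the orbit of the free action $z\mapsto tz$, and its tangent direction $y^3-x^3z \mapsto x^3z$ is a nonzero weight-zero element of $\Hom_R(I,R/I)$. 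The paper verifies (Appendix~\ref{sec:appendix}) that $T_M|_{[I]}^{T_0}$ is exactly $1$-dimensional, spanned by this vector. Moreover your own observation that the constraints $3w_2 = 3w_1+w_3$ and $w_1+w_2+w_3=0$ pin down the torus up to scaling shows there is no alternative Calabi--Yau $\CC^*$ to retreat to: $[I]$ is a \emph{non-isolated} fixed point for every admissible choice, so the hypothesis of Theorem~\ref{ref:isolatedPointBehrend:thm} can never be met at this point, and the perturbing term $y^3-x^3z$ \emph{creates} the weight-zero tangent direction rather than killing it.

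This is precisely the obstacle the paper is built to circumvent, and the missing idea is its Theorem~\ref{ref:BehrendFuncEquivariant:thm}. One uses the free one-parameter group $G=\{(1,1,t)\}$ transverse to the fixed family, applies Luna's \'etale slice theorem to write the ambient smooth variety \'etale-locally as $G\times B$, and then must show the critical locus of the superpotential splits as $G\times N$ with $N\subset B$ a critical locus in its own right, so that $[I]$ becomes isolated on the slice $N$. That splitting is not automatic: $G$ is not contained in the Calabi--Yau torus, so $f$ is not $G\times T_0$-adapted in the naive way and the standard obstruction theory does not obviously descend. The paper's fix is Proposition~\ref{ref:criticalLocusOfSemiinvariant:prop}: because $f$ is an $H$-semi-invariant of \emph{nontrivial} weight $2$ for $H=\{(t,t,1)\}$, the critical locus of a semi-invariant with nontrivial character satisfies $Z(df)=\pr_B^{-1}\bigl(Z(d\fbar)\cap Z(\fbar)\bigr)$, which yields equation~\eqref{eq:accordanceOfcritical} and hence the product structure $Z(df')\simeq G\times N$, with $N=Z(d\psi)$ carrying its own $T_0$-equivariant symmetric obstruction theory to which Theorem~\ref{ref:isolatedPointBehrend:thm} finally applies. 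Without this mechanism (or a substitute for it), your argument stops at the non-isolated fixed point and does not yield $\nu_M([I])=-1$.
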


Although the point $P$ of the theorem is a fixed point of a suitable $\CC^*$-action, it is \emph{non-isolated}, so we cannot apply Theorem \ref{ref:isolatedPointBehrend:thm} directly. We circumvent this by taking a quotient by another transverse free $\CC^*$-action such that $P$ becomes an isolated fixed point on the quotient. Since this transverse $\CC^*$ is not in the Calabi-Yau torus, the usual symmetric obstruction theory is not equivariant and it is not clear whether it descends to the quotient. We construct another superpotential on the quotient, using an ``average'' of the usual superpotential, and apply Theorem~\ref{ref:isolatedPointBehrend:thm} to the symmetric obstruction theory obtained from this other superpotential.

Our argument is quite general and does not use much about the Hilbert scheme,
only some equivariance properties of the superpotential,
see~\S\ref{sec:Hilbert}.

\subsection{Parity of the tangent space}

The constancy of the Behrend function is related to another conjecture
disproved earlier this year. The conjecture predicted that for every point
$P\in \Hilb^n(\mathbb{A}^3)$ we have
\begin{equation}\label{eq:parity}
    \dim_{\mathbb{C}} T_{\Hilb^n(\mathbb{A}^3)}|_P \equiv n \pmod{2}.
\end{equation}
The relation between the two conjectures is Theorem \ref{ref:isolatedPointBehrend:thm}. The equation~\eqref{eq:parity} was established for monomial ideals
in~\cite{MNOP} and recently for homogeneous ideals (even with respect to some
non-standard gradings) in~\cite{Ramkumar_Sammartano_parity}.
However, a counterexample to~\eqref{eq:parity} was given
by Giovenzana--Giovenzana--Graffeo--Lella~\cite{Graffeo_Giovenzana_Lella}.

Although their counterexample does not directly fit the hypotheses in
Section~\S\ref{sec:Hilbert}, our example in Theorem~\ref{ref:intro:mainthm} is
very closely related and the present paper would not exist without their example.

\subsection{Open questions}

It is unknown whether $\Hilb^{n}(\mathbb{A}^3)$ is irreducible for $n = 12$\footnote{There
is an ongoing project by Sema Gunturkun to prove this.}, see
also~\cite[Problem~V]{Jelisiejew__open_problems}. Proving irreducibility
for $n=24$ seems hopeless, however in~\cite{Graffeo_Giovenzana_Lella_two} the authors show that the point described in Theorem~\ref{ref:intro:mainthm} lies in the smoothable component. Due to the connection with
reducedness~\cite{Ricolfi_notes} it is natural to ask: \emph{Is
    $\Hilb^{24}(\mathbb{A}^3)$ reduced at $[I]$?} We do not know the answer. In general, it is not known whether $\Hilb^n(\mathbb{A}^3)$ is reduced, see~\cite[Problem~I]{Jelisiejew__open_problems}. See also~\cite{Graffeo_Giovenzana_Lella_two} for results and questions for the Quot scheme.

\subsection*{Acknowledgements.} We thank Gavril Farkas and Rahul Pandharipande for organizing the ``Workshop Hilbert schemes of points'' on 28-9-2023 at the Humboldt University, which prompted this work. In particular, Pandharipande asked whether the Giovenzana--Giovenzana--Graffeo--Lella counterexample also provides a counterexample to constancy of the Behrend function. We warmly thank Alessio Sammartano for the breakfast discussion between him and the first- and second-named authors, which laid the foundations for the main idea of this paper. We also warmly thank Michele Graffeo, in particular for the lunch discussion he had with the first-named author in Warsaw. We thank Jim Bryan, Andrea T.~Ricolfi, Balazs Szendr\H{o}i, and Richard P.~Thomas for comments on earlier versions of this article or related discussions.

JJ is supported by National Science Centre grant 2020/39/D/ST1/00132. This work is partially supported by  the Thematic Research Programme ``Tensors: geometry, complexity and quantum entanglement'', University of Warsaw, Excellence Initiative – Research University and the Simons Foundation Award No.~663281 granted to the Institute of Mathematics of the Polish Academy of Sciences for the years 2021--2023. MK and RFS are supported by NWO Grant VI.Vidi.192.012 and MK is also supported by ERC Consolidator Grant FourSurf 101087365.

\section{Behrend function in the equivariant setup}\label{sec:Behrend}

All schemes considered in this paper are of finite type over $\mathbb{C}$.
Let $P$ be a fixed point for an action of an algebraic group $G$ on a scheme $X$. We say that $P$ is \emph{isolated} if $P$ is equal \emph{as a scheme} to a connected component of the fixed locus $X^G$. If $P$ is smooth and $G$ is linearly reductive, then $P$ is isolated if and only if $\{P\}$ is a connected component of the topological space $X^G$. However, even on a normal variety $X$, it may happen that a singular point $P$ is isolated topologically, but $X^G$ is nonreduced at $P$.

For a scheme $X$ over $\mathbb{C}$, the Behrend function~\cite{Behrend__function} is a constructible function $\nu_X\colon X(\mathbb{C})\to \mathbb{Z}$ which gives rise to an ``Euler characteristic description'' of Donaldson-Thomas invariants in the case $X$ is a moduli space of stable sheaves on a smooth projective Calabi-Yau 3-fold. The value $\nu_X(x)$ depends only on (any) \'etale neighbourhood of $x\in X$  \cite[p.1309]{Behrend__function} and for a morphism $f\colon X\to Y$ smooth at $x\in X$  we have $\nu_X(x) = (-1)^{\dim f^{-1}(f(x))}\nu_Y(f(x))$, see~\cite[p.1309]{Behrend__function} and~\cite[\href{https://stacks.math.columbia.edu/tag/039P}{Tag 039P}]{stacks-project}.
It is generally very hard to determine the value of the Behrend function at a given point,
see~\cite{Graffeo_Ricolfi} for results in the case when $X$ is zero-dimensional.

\subsection{Critical loci}

For generalities on critical loci and symmetric obstruction theories, we refer
to~\cite{Behrend__Fantechi, Ricolfi__Modern_Enumerative_Geometry}.
We recall some basics for completeness.
For a smooth variety $X$ and a global 1-form $\omega\in H^0(\Omega_X)$, we have a vanishing
scheme $Z(\omega)$. Over an open subset $U$ on which we have a trivialization $\Omega_{X}|_U \simeq
\bigoplus_{i=1}^{\dim X} \OO_U dz_i$ with $\omega = \sum f_idz_i$, the vanishing
scheme $Z(\omega)\cap U = Z(\omega|_U)$ is given by $f_1 =  \cdots = f_{\dim X} = 0$.
For a \emph{smooth} morphism $\varphi\colon Y\to X$ and the form $df$, for some global regular function $f\colon X\to \mathbb{C}$, we have
\begin{equation}\label{eq:pullback}
    \varphi^{-1}(Z(df)) = Z(d(f\circ\varphi)).
\end{equation}

\newcommand{\fbar}{\overline{f}}%
The following notation will be useful to us.
\begin{definition}\label{ref:semiinvariant:def}
Let $G$ be an algebraic
group, $X$ a $G$-variety, and $f\colon X\to \mathbb{C}$ a morphism. Let $\chi\colon
G\to\mathbb{C}^*$ be a character. We say that $f$ is
a \emph{$G$-semi-invariant with weight $\chi$} if $f(g\cdot P) = \chi(g)f(P)$
for all $g\in G$ and $P \in X$.
\end{definition}
\begin{proposition}[Critical locus of a semi-invariant]\label{ref:criticalLocusOfSemiinvariant:prop}
    Let $G$ be an algebraic group and $\chi\colon G\to \mathbb{C}^*$ a
    character, nontrivial on the connected component of $1_G\in G$.\footnote{In particular, the connected component of $1_G \in G$ is not zero-dimensional.} Let $B$ be a smooth variety and $f\colon G\times B\to \mathbb{C}$ a
    $G$-semi-invariant with weight $\chi$, where $G$ acts trivially on $B$. Take $\fbar := f(1_G, -)\colon B\to \mathbb{C}$.
    Then
    \[
        Z(df) = \pr_B^{-1}(Z(d\fbar)\cap Z(\fbar)),
    \]
    where $\pr_B\colon G\times B\to B$ is the projection.
\end{proposition}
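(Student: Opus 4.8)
The plan is to exploit the semi-invariance to show that, up to the nowhere-vanishing factor $\chi$, the function $f$ is pulled back from $B$, and then to read off $Z(df)$ from a local trivialization of $\Omega_{G\times B}$. First I would observe that semi-invariance with respect to the left-translation action on the $G$-factor forces, for every $(g,b)\in G\times B$,
\[
    f(g,b) = f\bigl(g\cdot(1_G,b)\bigr) = \chi(g)\,f(1_G,b) = \chi(g)\,\fbar(b).
\]
Hence $f = \chi\cdot\pr_B^*\fbar$ as a regular function on $G\times B$, where I abusively write $\chi$ also for its pullback along $\pr_G$.

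Next I would compute $df$ on an open set $U$ where $\Omega_G$ is trivialized by $du_1,\dots,du_m$ and $\Omega_B$ by $dv_1,\dots,dv_k$, so that $d\chi=\sum_i(\partial_{u_i}\chi)\,du_i$ and $d\fbar=\sum_j(\partial_{v_j}\fbar)\,dv_j$. Since $\chi$ depends only on the $u$-coordinates and $\fbar$ only on the $v$-coordinates, the Leibniz rule gives
\[
    df = \fbar\sum_{i}(\partial_{u_i}\chi)\,du_i \;+\; \chi\sum_{j}(\partial_{v_j}\fbar)\,dv_j,
\]
so the ideal of $Z(df)$ is locally generated by the functions $\fbar\,\partial_{u_i}\chi$ for $i=1,\dots,m$ together with $\chi\,\partial_{v_j}\fbar$ for $j=1,\dots,k$. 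Because $\chi$ takes values in $\mathbb{C}^*$ it is a unit, so the second batch generates the same ideal as $\partial_{v_1}\fbar,\dots,\partial_{v_k}\fbar$, which cut out $\pr_B^{-1}(Z(d\fbar))$.

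The crux is the first batch, and here I would prove the key lemma that $d\chi$ is a \emph{nowhere-vanishing} section of $\Omega_G$. The cleanest route is via the logarithmic derivative $\omega:=\chi^{-1}\,d\chi$, the pullback along $\chi$ of the invariant form $dt/t$ on $\mathbb{C}^*$; multiplicativity of $\chi$ makes $\omega$ a left-invariant $1$-form, hence determined by its value $\omega_{1_G}=(d\chi)_{1_G}$ at the identity. If this value vanished, then $\omega$, and therefore $d\chi=\chi\,\omega$, would vanish identically, forcing $\chi$ to be constant on the connected component of $1_G$ and contradicting the hypothesis that $\chi$ is nontrivial there. Thus $(d\chi)_{1_G}\neq 0$, and left-invariance propagates this to every point. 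Consequently $\partial_{u_1}\chi,\dots,\partial_{u_m}\chi$ generate the unit ideal on (a possibly smaller) $U$, so the first batch $\fbar\,\partial_{u_i}\chi$ generates exactly $(\fbar)$, cutting out $\pr_B^{-1}(Z(\fbar))$.

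Combining the two batches, the ideal of $Z(df)$ is locally $(\fbar,\partial_{v_1}\fbar,\dots,\partial_{v_k}\fbar)$, which is precisely the ideal defining the scheme-theoretic preimage $\pr_B^{-1}\bigl(Z(\fbar)\cap Z(d\fbar)\bigr)$; as this matching of ideal sheaves is local and canonical it glues to the claimed equality of schemes. I expect the main obstacle to be the key lemma on $d\chi$: one must argue scheme-theoretically, ensuring that ``nowhere vanishing'' yields the \emph{unit ideal} rather than merely an equality of underlying sets, and that the constancy of the rank of $d\chi$ is correctly deduced from the homomorphism property of $\chi$.
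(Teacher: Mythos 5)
Your proof is correct, and its skeleton is the same as the paper's: both rest on the factorization $f=\chi\cdot\pr_B^*\fbar$, the resulting splitting $df=\fbar\,d\chi+\chi\,d\fbar$ into the $\pr_G^*\Omega_G$- and $\pr_B^*\Omega_B$-summands of $\Omega_{G\times B}$ (your coordinate computation with the $du_i$ and $dv_j$ is exactly this direct-sum decomposition written in a local frame), and the two observations that $\chi$ is a unit and that $d\chi$ is nowhere vanishing. The one genuine divergence is how you prove the key lemma that $d\chi$ vanishes nowhere. The paper argues that $\chi$, being nontrivial on the identity component, is a surjective homomorphism of algebraic groups onto the curve $\mathbb{C}^*$, hence a smooth morphism, so its differential is nowhere zero. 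You instead use the logarithmic derivative $\omega=\chi^{-1}d\chi=\chi^*(dt/t)$: left-invariance of $\omega$ (which follows from multiplicativity, since $L_g^*d\chi=\chi(g)\,d\chi$) reduces everything to the value at $1_G$, and vanishing there would force $\chi$ to be locally constant, contradicting nontriviality on the identity component. Both arguments are valid; yours is more elementary and self-contained, avoiding the (standard but unproved-in-the-paper) facts that the image of a homomorphism is closed and that a surjective homomorphism of smooth groups is smooth, while the paper's is shorter. Your explicit bookkeeping of ideals --- noting that the unit $\chi$ and the locally-unit components of $d\chi$ let the two batches of generators collapse to $(\pr_B^*\fbar,\ \pr_B^*\partial_{v_1}\fbar,\dots,\pr_B^*\partial_{v_k}\fbar)$, the ideal of the scheme-theoretic preimage --- is also a welcome level of care, since the claimed equality $Z(df)=\pr_B^{-1}(Z(d\fbar)\cap Z(\fbar))$ is one of schemes, not just of sets.
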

We observe that if $\chi$ above is trivial, then $Z(df) =
\pr_B^{-1}(Z(d\fbar))$, so the claim fails.
\begin{proof}
    By equivariance of $f$, we have $f(g, b) = \chi(g)f(1, b) =
    \chi(g)\fbar(b)$. Therefore, we also have
    \begin{equation}\label{eq:dfsplitting}
        df = \chi d\fbar + \fbar d\chi.
    \end{equation}
    The character $\chi\colon G\to \mathbb{C}^*$ is a surjection of algebraic
    groups, hence it is a smooth morphism onto a curve, so
    its differential is nowhere vanishing and $Z(\fbar d\chi) =
    Z(\fbar)$. 
    The function $\chi$ is also nowhere
    vanishing, so $Z(\chi d\fbar) = Z(d\fbar)$.
    The two summands
    in~\eqref{eq:dfsplitting} are elements of distinct summands of the direct sum
    $\pr_G^*\Omega_{G} \oplus \pr_B^*\Omega_B \simeq \Omega_{G\times B}$, thus
    \[
        Z(df) = Z(\chi d\fbar) \cap Z(\fbar d\chi) = \pr_B^{-1}\left(Z(d\fbar) \cap Z(\fbar)\right),
    \]
    as claimed.
\end{proof}
For examples of functions $f$ such that $Z(df)$ does not contain $Z(f)$ and for related pathologies, see for example~\cite{Saito__Quasihomogeneous}
and~\cite[App.~A]{Maulik_Pandharipande_Thomas}.

\subsection{General setup}

Let $f \colon A \to \mathbb{C}$ be a regular function on a smooth variety $A$.
Let $M:=Z(df)$ be the critical locus of $f$ and $P \in M$ a closed point. We
are interested in the value $\nu_M(P)$ of the Behrend function.

For any $d \geq 0$, let $G$ be a $d$-dimensional affine algebraic group acting regularly on $A$. Let $T_0 \cong \mathbb{C}^*$ be an algebraic torus acting regularly on $A$, and commuting with $G$, such that:
\begin{itemize}
    \item $f$ is $T_0$-invariant and $f$ is a $G$-semi-invariant (Definition~\ref{ref:semiinvariant:def}),
    \item $\mathrm{Stab}_G(P) = \{1\}$ and $P$ lies in the $T_0$-fixed locus,
    \item the $T_0$-fixed part of the tangent space at $P$, $T_M|_P^{T_0} \subset T_M|_P$, is $d$-dimensional.
\end{itemize}
Suppose further that there is another linearly reductive group $H$
acting regularly on $A$, and commuting with $G$ and $T_0$, such that $P$ lies in the $H$-fixed locus and $f$ is an $H$-semi-invariant with character $\chi \colon H \to \CC^*$ which is \emph{nontrivial} on the connected component of $1_H \in H$.

\begin{theorem}\label{ref:BehrendFuncEquivariant:thm}
    In the setup above, we have $\nu_M(P) = (-1)^{\dim_{\CC} T_M|_P}$.
\end{theorem}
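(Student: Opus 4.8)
The plan is to use the free $G$-action near $P$ to pass to a transverse slice on which $P$ becomes an \emph{isolated} $T_0$-fixed point, and to exhibit the corresponding quotient of $M$ as an honest critical locus, so that Theorem~\ref{ref:isolatedPointBehrend:thm} applies. First I would construct an equivariant slice. Since $\mathrm{Stab}_G(P)=\{1\}$, the orbit $G\cdot P$ is smooth of dimension $d$, and because $G$ commutes with the reductive group $T_0\times H$ (which fixes $P$), the subspace $T_{G\cdot P}|_P$ is a $(T_0\times H)$-submodule of $T_A|_P$. Choosing a $(T_0\times H)$-invariant complement $W$ and linearizing the reductive action near the fixed point $P$ (via Sumihiro), I obtain a smooth $(T_0\times H)$-invariant slice $B\ni P$ with $T_B|_P=W$, transverse to the orbit. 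The multiplication $\mu\colon G\times B\to A$ is then étale at $(1_G,P)$; after shrinking $B$ it identifies a $G$-saturated neighborhood of $P$ with $G\times B$, on which $G$ acts by left translation while $T_0$ and $H$ act only on the factor $B$. Writing $\fbar:=f|_B$ and letting $\psi$ be the $G$-weight of $f$, the pullback of $f$ becomes $(g,b)\mapsto \psi(g)\fbar(b)$.

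The crucial observation is that the nontrivial $H$-character forces $Z(d\fbar)\subseteq Z(\fbar)$. Indeed, I would pick a one-parameter subgroup $\lambda\colon\Gmult\to H$ with $\chi\circ\lambda$ nontrivial, say of weight $k\neq 0$, and let $\xi$ be its fundamental vector field on $B$. Differentiating the semi-invariance $\fbar(\lambda(t)\cdot b)=t^{k}\fbar(b)$ at $t=1$ gives the quasi-homogeneity relation $\langle d\fbar,\xi\rangle=k\,\fbar$ on all of $B$; hence $d\fbar=0$ implies $\fbar=0$. Consequently, whether or not $\psi$ is trivial, Proposition~\ref{ref:criticalLocusOfSemiinvariant:prop} (or a direct computation when $\psi$ is trivial) yields $M\cap(G\times B)=\pr_B^{-1}\big(Z(d\fbar)\cap Z(\fbar)\big)=G\times Z(d\fbar)$, so the $G$-quotient of $M$ is exactly the critical locus $N:=Z(d\fbar)$ of the regular function $\fbar$ on the smooth slice $B$. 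Since the Behrend function is étale-local and picks up the sign $(-1)^{d}$ under the smooth projection $G\times N\to N$, I obtain $\nu_M(P)=(-1)^{d}\,\nu_N(P)$.

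It then remains to apply Theorem~\ref{ref:isolatedPointBehrend:thm} to $N$. Because $G$ commutes with $T_0$ and $P$ is $T_0$-fixed, the orbit $G\cdot P$ is pointwise $T_0$-fixed, so $T_{G\cdot P}|_P\subseteq T_M|_P^{T_0}$; as both are $d$-dimensional they coincide, whence the complementary tangent space $T_N|_P$ has trivial $T_0$-fixed part, i.e.\ $P$ is an isolated $T_0$-fixed point of $N$. Moreover $\fbar$ is $T_0$-invariant, so the canonical symmetric obstruction theory on $N=Z(d\fbar)$ is $T_0$-equivariant, and Theorem~\ref{ref:isolatedPointBehrend:thm} gives $\nu_N(P)=(-1)^{\dim_{\CC}T_N|_P}$. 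Combining this with $\dim_{\CC}T_M|_P=d+\dim_{\CC}T_N|_P$ yields $\nu_M(P)=(-1)^{d}(-1)^{\dim_{\CC}T_N|_P}=(-1)^{\dim_{\CC}T_M|_P}$. The step I expect to be the main obstacle is setting up the slice so that all three commuting actions behave as required \emph{and} the quotient of $M$ is cut out as a genuine critical locus: this is precisely where the nontrivial character $\chi$ is indispensable, since it is the relation $\langle d\fbar,\xi\rangle=k\fbar$ that upgrades $Z(d\fbar)\cap Z(\fbar)$ to the critical locus $Z(d\fbar)$ and thereby equips the quotient with a symmetric obstruction theory.
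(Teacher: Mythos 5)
Your proposal is correct and shares the paper's overall skeleton --- slice away the free $G$-action so that, \'etale-locally, $M$ becomes $G\times N$ with $N$ an honest critical locus on the slice, check that the slice point is an isolated $T_0$-fixed point of $N$ by the dimension count $T_{G\cdot P}|_P = T_M|_P^{T_0}$, and apply Theorem~\ref{ref:isolatedPointBehrend:thm} --- but you prove the central identification~\eqref{eq:accordanceOfcritical} by a genuinely different argument. The paper never differentiates along the group: it introduces the auxiliary smooth map $\sigma\colon G\times H\times B\to G\times B$, $(g,h,b)\mapsto (g, h\cdot b)$, and applies Proposition~\ref{ref:criticalLocusOfSemiinvariant:prop} \emph{twice}, to $f'\circ\sigma$ and to $\psi\circ\pr_B\circ\sigma$; since both are $(G\times H)$-semi-invariants whose character is automatically nontrivial on the identity component (the $H$-factor $\chi$ guarantees this regardless of the $G$-weight), both critical loci equal $p^{-1}\left(Z(d\psi)\cap Z(\psi)\right)$, and pulling back along the smooth surjection $\sigma$ (or restricting to its section) yields~\eqref{eq:accordanceOfcritical} with no case distinction. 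Your route extracts the infinitesimal content directly: a one-parameter subgroup of $H$ pairing nontrivially with $\chi$ (which exists since $H$ is linearly reductive and $\chi$ is nontrivial on the identity component) gives the Euler relation $\langle d\overline{f},\xi\rangle = k\overline{f}$, hence $Z(d\overline{f})\subseteq Z(\overline{f})$, which upgrades the output of Proposition~\ref{ref:criticalLocusOfSemiinvariant:prop} to $Z(df')=\pr_B^{-1}(Z(d\overline{f}))$; this is exactly the inclusion the paper's $\sigma$-trick establishes implicitly, and your version is more elementary, works uniformly in the $G$-weight, and correctly isolates why the nontrivial $\chi$ is indispensable (note the $G$-weight $t\mapsto t$ in the Hilbert-scheme application is itself nontrivial, so the upgrade is genuinely needed there). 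Two small inaccuracies, neither fatal: your hand-built slice is essentially Luna's theorem, which the paper simply invokes in the strong form of \cite[Thm.~4.5]{AHR} to obtain the $(G\times H\times T_0)$-equivariant \'etale map $G\times B\to A$ in one stroke (Sumihiro linearization alone does not obviously produce an $(H\times T_0)$-stable slice for non-toral reductive $H$); and your claim that $\mu$ identifies $G\times B$ with a \emph{$G$-saturated open neighborhood} is unjustified --- \'etaleness can fail to be injective --- but also unnecessary, since the Behrend function and the identity $\mu^{-1}(Z(df))=Z(d(f\circ\mu))$ are \'etale-local, so you should phrase the argument in terms of $f':=f\circ\mu$ rather than $f|_{G\times B}$, as the paper does.
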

\begin{proof}
First we reduce to the case of $A$ being a product $G \times B$.
By assumption, the stabilizer $\mathrm{Stab}_{G\times H\times T_0}(P)$ is equal to $H\times T_0$, hence is linearly reductive. A strong version of Luna's \'etale slice theorem \cite[Thm.~4.5]{AHR} provides an affine scheme $B$ with a $(H\times T_0)$-action, a $(H \times T_0)$-fixed point $Q \in B$, and a $(G \times H \times T_0)$-equivariant \'etale morphism $\mu \colon G \times B \to A$ mapping $P' := (1_G, Q)$ to $P$. Since $A$ is smooth, it follows that $B$ is a smooth variety.
Let $f' := f \circ \mu \colon G \times B \to \CC$. Since
$\mu^{-1}(Z(df)) = Z(df')$, we obtain an \'etale morphism $\mu
\colon M':=Z(df') \to M$ and $\nu_{M'}(P') = \nu_M(P)$. We note that $f'$ is
$T_0$-invariant, and is a $G$- and $H$-semi-invariant with the same characters as
before. Moreover $\mathrm{Stab}_{G}(P') = \{1\}$, $P'$ lies in the $T_0$- and
$H$-fixed loci, and $T_{M'}|_{P'}^{T_0}$ is $d$-dimensional.

Let $\psi = f'(1_G, -)\colon B\to \mathbb{C}$. We claim that
\begin{equation}\label{eq:accordanceOfcritical}
    \pr_B^{-1}(Z(d\psi)) = Z(df').
\end{equation}
Then $N:=Z(d\psi) \subset B$ has a $T_0$-equivariant symmetric perfect obstruction theory by \cite[Ex.~1.19]{Behrend__Fantechi}. Moreover, since $T_{M'}|_{P'}^{T_0}$ is $d$-dimensional, it follows that $T_N|_Q^{T_0}$ is 0-dimensional and $Q \in N$ is an isolated $T_0$-fixed point, so we may apply Theorem~\ref{ref:isolatedPointBehrend:thm}. Therefore, assuming~\eqref{eq:accordanceOfcritical}, we get that $Z(df') \simeq
G \times N$ and so we obtain
\[
\nu_M(P) = \nu_{M'}(P') = (-1)^d \nu_N(Q) = (-1)^{d + \dim_{\CC} T_N|_{Q}} = (-1)^{\dim_{\CC} T_{M'}|_{P'}} = (-1)^{\dim_{\CC} T_{M}|_{P}},
\]
which concludes the proof.

It remains to prove~\eqref{eq:accordanceOfcritical}.
Let $\sigma_{H}\colon H \times B\to B$ be the action map, so
$\sigma_{H}(h, b) = h \cdot b$. The action map is smooth, since it is a composition of the isomorphism $(h, b) \mapsto (h, h\cdot b)$ and a
projection $\pr_B\colon H \times B \to B$ where $H$ is smooth.
Let $\sigma\colon G \times H \times B \to G \times B$ be the induced smooth
map, defined by $\sigma(g, h, b) = (g, h\cdot b)$.
We have the following situation
\[
    \begin{tikzcd}
        G \times H \times B \ar[d, "\sigma"]\\
        G \times B \ar[r, "\textrm{\'etale}"]\ar[rr, "f'", bend left]\ar[d, "\pr_B"] & A \ar[r] & \mathbb{C}\\
        B \ar[rr, "\psi"] & & \mathbb{C}
    \end{tikzcd}
\]
Consider the functions $f'' = f'\circ \sigma$ and $\psi'' = \psi\circ \pr_B \circ
\sigma$. 
They are both $(G\times H)$-semi-invariants with characters that are nontrivial on the connected component of $(1_G,1_H) \in G \times H$. Moreover, the functions $f''(1_G, 1_{H}, -) = \psi''(1_G, 1_{H}, -)$ are both equal
to $\psi$. Applying Proposition~\ref{ref:criticalLocusOfSemiinvariant:prop} to
each of them, we obtain 
\[
    Z(df'') = p^{-1}\Big(Z(d\psi)\cap Z(\psi)\Big) = Z(d\psi''),
\]
where $p\colon G\times H \times B\to B$ is the projection to $B$.
The maps $\sigma$ and $\pr_B$ are smooth, so
\[
    \sigma^{-1}(Z(df')) = Z(df'') = Z(d\psi'') = \sigma^{-1}(Z(d(\psi\circ \pr_B)) = \sigma^{-1}\pr_B^{-1} Z(d\psi).
\]
We obtain that the preimages of both sides
of~\eqref{eq:accordanceOfcritical} under a smooth surjective map $\sigma$ are equal, hence both sides of~\eqref{eq:accordanceOfcritical} are equal. The map $\sigma$ has a natural section, so the equality of both sides also follows by restricting to this section.
\end{proof}

\section{Applications to Hilbert schemes}\label{sec:Hilbert}

Let $M:=\Hilb^n(\mathbb{A}^3)$ be the Hilbert scheme of $n$ points on
$\mathbb{A}^3$. We recall the well-known fact~\cite{Szendroi, Szendroi_Dimca,
Ricolfi__Modern_Enumerative_Geometry} that there exists a smooth variety $A$
with regular function $f \colon A \to \mathbb{C}$ such that $M = Z(df)$. Let $V$ be an $n$-dimensional complex vector space and consider the quiver in Figure \ref{fig1}.
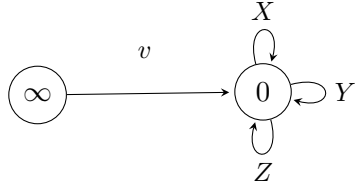
\begin{figure}[h] \label{fig1}
\begin{tikzpicture}[>=stealth,->,shorten >=2pt,looseness=.5,auto]
  \matrix [matrix of math nodes,
           column sep={3cm,between origins},
           row sep={3cm,between origins},
           nodes={circle, draw, minimum size=7.5mm}]
{ 
|(A)| \infty & |(B)| 0 \\         
};
\tikzstyle{every node}=[font=\small\itshape]
\path[->] (B) edge [loop above] node {$X$} ()
              edge [loop right] node {$Y$} ()
              edge [loop below] node {$Z$} ();

\node [anchor=west,right] at (-0.3,0.55) {$v$};              
\draw (A) to [bend left=0,looseness=1] (B) node [midway] {};
\end{tikzpicture}
\caption{Quiver for the non-commutative Hilbert scheme.}
\end{figure}

We consider representations of this quiver with dimension vector $(1,n)$, i.e.~we put the vector space $\mathbb{C}$ at the node $\infty$ and $V$ at the node $0$. 
Representations with this dimension vector correspond to elements of $W = \mathbb{C}^3 \otimes \mathrm{End}(V) \oplus V$.
The group $\GL(V)$ acts on $W$ by
$$
g \cdot (X,Y,Z,v) := (g X g^{-1},gYg^{-1},gZg^{-1},g v).
$$
Let $\widetilde{A} \subset W$ be the open subset consisting of representations satisfying
$$
\CC\langle X,Y,Z \rangle \cdot \langle v \rangle_{\CC} = V.
$$ 
Then $\GL(V)$ acts freely on $\widetilde{A}$ and the non-commutative Hilbert scheme is defined as the smooth variety
$$
A:=\mathrm{ncHilb}^{n}(\AA^3) = \widetilde{A} / \GL(V).
$$
The regular function $\widetilde{f} \colon \widetilde{A} \to \CC$, $\widetilde{f}(X,Y,Z,v) = \mathrm{tr}(X[Y,Z])$ is invariant under the action of $\GL(V)$ and descends to a regular function $f \colon A \to \CC$, and $M = Z(df)$.
Furthermore, we have a regular action of $(\CC^*)^3$ on $\widetilde{A}$ defined by
\[
(t_1,t_2,t_3) \cdot (X,Y,Z,v) = (t_1X,t_2Y,t_3Z,v),
\]
which descends to an action on $A$. Clearly, we have
\[
f((t_1,t_2,t_3)\cdot P) = t_1t_2t_3 f(P), \quad \forall (t_1,t_2,t_3) \in (\CC^*)^3, \, P \in A.
\]

\begin{theorem}\label{ref:main:thm}
For $n\geq 24$, there exist points of $\Hilb^n(\mathbb{A}^3)$ on which
    the Behrend function is equal to $-(-1)^n$. In particular, the Behrend
    function is not constant.
    For $n=24$ one such point is $[I]$, where $I$ is the following ideal in $\CC[x,y,z]$
    \[
        I = \left( (x^2) + (y, z)^2 \right)^2 + (y^3 - x^3z).
    \]
\end{theorem}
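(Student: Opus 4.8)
The plan is to realize $[I]$ as the point $P$ in the general setup of Section~\ref{sec:Behrend} and apply Theorem~\ref{ref:BehrendFuncEquivariant:thm}, which reduces the computation of $\nu_M(P)$ to the \emph{parity} of $\dim_\CC T_M|_P = \dim_\CC \Hom_R(I, R/I)$, where $R = \CC[x,y,z]$. Concretely, I take $A = \mathrm{ncHilb}^n(\AA^3)$ with superpotential $f = \mathrm{tr}(X[Y,Z])$ and $M = Z(df) = \Hilb^n(\AA^3)$ as recalled above, and I look for three commuting one-parameter subgroups $G, T_0, H \subset (\CC^*)^3$ satisfying the hypotheses of that theorem at $P = [I]$. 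Once these are in place, Theorem~\ref{ref:BehrendFuncEquivariant:thm} gives $\nu_M(P) = (-1)^{\dim_\CC T_M|_P}$, and it remains only to show this dimension is odd, so that $\nu_M(P) = -1 = -(-1)^{24}$.

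First I would pin down the subgroups. Since $I$ is a monomial ideal together with the single binomial $y^3 - x^3 z$, it is homogeneous exactly for the gradings $(w_1,w_2,w_3)$ with $3w_2 = 3w_1 + w_3$; these cut out the two-dimensional stabilizer $S = \{t_2^3 = t_1^3 t_3\} = \mathrm{Stab}_{(\CC^*)^3}(P)$. Inside $S$ I take $T_0$ to be the cocharacter $s \mapsto (s^2, s, s^{-3})$, which lies in the Calabi--Yau torus $\{t_1t_2t_3 = 1\}$, so $f$ is $T_0$-invariant; I take $H$ to be $s \mapsto (s,s,1)$, which lies in $S$ but not in the Calabi--Yau torus, so $f$ is an $H$-semi-invariant with the nontrivial character $s \mapsto s^2$. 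Finally I take $G$ to be $s \mapsto (1,1,s)$: one checks $G \cap S = \{1\}$, whence $\mathrm{Stab}_G(P) = \{1\}$, while $f$ is a $G$-semi-invariant with nontrivial character $s\mapsto s$. The cocharacters $(2,1,-3),(1,1,0),(0,0,1)$ have determinant $1$, so they form a basis of the cocharacter lattice and $G,T_0,H$ are genuinely transverse. The only non-formal hypothesis is that $T_M|_P^{T_0}$ is $d = \dim G = 1$ dimensional; since the infinitesimal generator of $G$ (the class of $z\partial_z$) always contributes one $T_0$-fixed tangent direction, this reduces to checking that the weight-zero part of $\Hom_R(I, R/I)$ for the grading $(2,1,-3)$ is exactly one-dimensional, which I would confirm by a direct (e.g.\ computer-algebra) computation.

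The heart of the matter is the graded computation that $\dim_\CC R/I = 24$ and, crucially, that $\dim_\CC \Hom_R(I, R/I)$ is \emph{odd}. This is precisely where the example uses the Giovenzana--Giovenzana--Graffeo--Lella ideal~\cite{Graffeo_Giovenzana_Lella}: the whole point is that $[I]$ violates the parity~\eqref{eq:parity}, i.e.\ $\dim_\CC T_{\Hilb^{24}}|_{[I]} \not\equiv 24 \pmod 2$. I expect this tangent-space computation to be the main obstacle, both because it is the quantitative input on which everything rests and because it must be carried out carefully, either by hand via the Hilbert function of the graded module $\Hom_R(I,R/I)$ or verified on a computer. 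Granting it, Theorem~\ref{ref:BehrendFuncEquivariant:thm} immediately yields $\nu_M([I]) = -1 = -(-1)^{24}$.

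Finally, for general $n \geq 24$ I would not repeat the construction but reduce to the case $n = 24$ using multiplicativity of the Behrend function. Choosing $n-24$ distinct reduced points of $\AA^3$ away from the origin and taking their disjoint union with the length-$24$ scheme $\Spec(R/I)$, the Hilbert scheme is \'etale-locally a product near the resulting point $[Z]$, at which the Behrend function factors as $\nu([Z]) = \nu_{\Hilb^{24}}([I])\cdot (-1)^{3(n-24)} = (-1)\cdot (-1)^{n-24} = -(-1)^n$, the factor $(-1)^{3(n-24)}$ coming from the smooth locus of $n-24$ distinct reduced points. This produces points with $\nu = -(-1)^n$ for every $n \geq 24$ and completes the proof.
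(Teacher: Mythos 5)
Your proposal is correct and follows essentially the same route as the paper: the same presentation $M = Z(df)$ on $A = \mathrm{ncHilb}^n(\AA^3)$, the identical one-parameter subgroups $T_0 = (2,1,-3)$, $G = (0,0,1)$, $H = (1,1,0)$, the reduction via Theorem~\ref{ref:BehrendFuncEquivariant:thm} to the (odd, in fact $99$-dimensional) tangent space $\Hom(I,\CC[x,y,z]/I)$ together with the one-dimensionality of its $T_0$-fixed part spanned by $y^3 - x^3z \mapsto x^3z$, both verified by computer algebra, and the same disjoint-points trick for $n > 24$. Your additional remarks (the stabilizer condition $3w_2 = 3w_1 + w_3$, unimodularity of the three cocharacters, and the explicit multiplicativity $\nu([Z]) = \nu([I])\cdot(-1)^{3(n-24)}$) merely spell out details the paper leaves implicit.
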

\begin{proof}
    We denote $M = \Hilb^{24}(\mathbb{A}^3)$ and $P = [I]$. By hand or using \emph{Macaulay2} one
    shows that $\CC[x,y,z] / I$ is $24$-dimensional and $T_M|_P =
    \Hom_{\CC[x,y,z]}(I,\CC[x,y,z] / I)$ is $99$-dimensional, see
    Appendix~\ref{sec:appendix}. In particular, $(-1)^{\dim_{\CC} T_M|_P} \neq
    (-1)^{24}$.

    We want to apply Theorem~\ref{ref:BehrendFuncEquivariant:thm} with $P =
    [I]$, $M = \Hilb^{24}(\mathbb{A}^3)$ and $A,f$ as above. For this, we define the tori
\begin{align*}
T_0 := \{ (t^2,t,t^{-3}) \in (\CC^*)^3 \}, \quad G := \{ (1,1,t) \in (\CC^*)^3 \}, \quad H := \{ (t,t,1) \in (\CC^*)^3 \}.
\end{align*}
Clearly $f$ is $T_0$-invariant and is a $G$-semi-invariant with weight $1$. Moreover, $f$ is an
$H$-semi-invariant with weight $2$. We also note that $\mathrm{Stab}_G(P) =
\{1\}$ and $P$ is $T_0$-fixed and $H$-fixed. Again by hand or
\emph{Macaulay2}, we check that $T_M|_P^{T_0}$ is 1-dimensional,
Appendix~\ref{sec:appendix}. The monomial $x^3z$ maps to a socle element in the quotient $\CC[x,y,z]/I$ and $y^3-x^3z$ is a minimal generator of $I$, so the transformation that sends $y^3 - x^3z$ to $x^3z$ and other minimal generators to zero extends to a nonzero tangent vector in $T_M|_P$. This vector
spans the one-dimensional space $T_M|_P^{T_0}$.

Therefore, by Theorem~\ref{ref:BehrendFuncEquivariant:thm}, we have $\nu_M(P)
= -1$. Clearly, any $Q \in M$ corresponding to a reduced scheme
is a smooth point with $(3 \cdot 24)$-dimensional tangent space,
so it satisfies $\nu_M(Q) = 1$, thus the Behrend function is not constant on $M$.
To obtain the claim for $n\geq 24$, add a tuple of disjoint points in $\AA^3$ to $[I]$: \'etale locally the neighbourhood of the scheme obtained in this way is a product of the neighbourhood of a disjoint tuple of points and the neighbourhood of $[I]$. By the properties of the Behrend function on \'etale neighbourhoods and smooth maps, we conclude the result.
\end{proof}

\appendix
\goodbreak
\section{Macaulay2 code}\label{sec:appendix}
\begin{verbatim}
kk = QQ;
S = kk[x, y, z, Degrees=>{{1, 2}, {2, 1}, {3, -3}}];
I = ideal mingens((ideal(x^2) + ideal(y^2, y*z, z^2))^2 + ideal(y^3 - x^3*z));
tgI = Hom(I, S^1/I);
assert(rank source basis(S^1/I) == 24); -- Hilbert scheme of 24 points
assert(rank source basis(tgI) == 99); -- 99-dimensional tangent space
for i in -1000..1000 do (
    ress := rank source basis({i, 0}, tgI);
    if ress != 0 then
        print((i,0), ress);
);
\end{verbatim}
Macaulay2 technical note: it is necessary to work with an auxiliary grading, since $(2, 1, -3)$
raises a \texttt{no heft vector} error. Alternatively, one can work with the
purely non-negative degrees $\left\{ \left\{ 1, 0 \right\}, \left\{ 1, 1
\right\}, \left\{ 0, 3 \right\} \right\}$.

\bibliographystyle{alpha}
\bibliography{refs}
\end{document}